\def\be{\begin{equation}}
\def\ee{\end{equation}}
\def\l{\langle}
\def\r{\rangle}
\def\p{\parallel}
\def\R{I\!\!R}
\def\cR{{\cal R}}
\def\cN{{\cal N}}
\def\ba{\bar{A}}
\newtheorem{remark}{Remark}
\newtheorem{lemma}{Lemma}
\newtheorem{proposition}{Proposition}
\begin{document}

\centerline{\large \bf 
The augmented Block Cimmino algorithm revisited}

\vspace*{0.3cm}
\centerline
{A. Dumitra\c sc$^+$, Ph. Leleux$^{++}$, C. Popa$^{+, \dag}$, D. Ruiz$^{++}$ and S. Torun$^{++}$ }

\begin{center}
{\small 
$^+$``Ovidius'' University of Constanta, Romania \\
$^{++}$CERFACS Toulouse, France\\
$^{\dag}$Institute of Statistical Mathematics and Applied Mathematics of the Romanian Academy and 
Academy of Romanian Scientists,  Bucharest, Romania
}
\end{center}

{\bf Abstract.}  In this paper we replay the definitions, constructions and results from \cite{duffsisc} by completing  and developing  some of them to inconsistent least squares problems.

\vspace*{0.3cm}
{\bf Keywords:} linear least squares problems, parallel solution, augmented system, Cuthill-McKee algorithm, Moore-Penrose pseudoinverse

\vspace*{0.3cm}
{\bf MSC 2010 Classifications:} 65F10, 65F30

\section*{Introduction}
In the paper \cite{duffsisc} the authors design and study a novel way of improving the computational efficiency of the block Cimmino method for consistent large sparse linear systems of equations. They propose an augmenting procedure of the system matrix, such  that the subspaces corresponding to the partitions are orthogonal. This results in a requirement to solve smaller linear  systems in parallel. In our paper, we replay and complete the analysis of this procedure, and  try an extension of it to inconsistent linear least squares problems.

We conclude this introductory part, by presenting the notations and definitions used in the rest of the paper.  By $\l \cdot, \cdot \r, \p \cdot \p$ we will denote the euclidean scalar product and norm on some space $\R^q$. If $A$ is a (real) $m \times n$ matrix we will denote by $A^T, a_i, a^j, rank(A), \cR(A),$ $\cN(A), A^+$ the transpose, $i$-th row, $j$-th column, rank, range, null space and Moore-Penrose pseudoinverse of it.
The vectors $x \in \R^q$ will be considered as column vectors, thus with the above rows and columns, the matrix $A$ can be written as
\be
\label{0-1}
A = \left[ 
		\begin{matrix}
		(a_1)^T \\
		(a_2)^T \\
		 \dots  \\
		(a_n)^T \\
		\end{matrix}
		\right] ~~\textrm{or}~~ A = [a^1 a^2 \dots a^n ].
\ee
If for $1 \leq p < m$, we split the rows indices as $1 \leq m_1 < m_2 < \dots < m_p = m$ and the subsets
\be
\label{0-2}
N_1 = \{ 1, \dots, m_1 \}, N_2 = \{ m_1+1, \dots, m_2 \}, \dots, 
N_p = \{ m_{p-1}+1, \dots, m_p \},
\ee
and define the row blocks $A_1, A_2, \dots, A_p$ of $A$ as 
\be
\label{0-33p}
A_1 = \left[ 
		\begin{matrix}
		(a_1)^T \\
			 \dots  \\
		(a_{m_1})^T \\
		\end{matrix}
		\right], ~~A_2 = \left[ 
		\begin{matrix}
		(a_{m_1+1})^T \\		 
		\dots  \\
		(a_{m_2})^T \\
		\end{matrix}
		\right], ~~A_p = \left[ 
		\begin{matrix}
		(a_{m_{p-1}+1})^T \\		 
		\dots  \\
		(a_{m_p})^T \\
		\end{matrix}
		\right],
\ee
then $A$ and $A^T$ will be written as 
\be
\label{0-4p}
 A = \left[ 
		\begin{matrix}
		A_1  \\
		\dots \\
		A_p
		\end{matrix}
		\right], ~~~A^T = [(A_1)^T (A_2)^T \dots (A_p)^T ].
\ee
 If $m=n$ and $A$ is invertible, $A^{-1}$  will denote its inverse. The orthogonal projector onto a vector subspace $S \in \R^q$ will be written as $P_S$ and the dimension of $S$ as $dim(S)$. $I_q, ~O_q$ will stand for the unit, respectively zero matrix of order $q$, and $D=diag(\delta_1, \delta_2, \dots, \delta_n)$ will be a notation for the diagonal matrix
$$
\left[ 
		\begin{matrix}
		\delta_1 & 0 & \dots & 0\\
		0 & \delta_2 & \dots & 0 \\
		 & & \ddots & \\
		0 & 0 &\dots &\delta_n \\
		\end{matrix}
		\right]
$$
The notations 
If $b \in \cR(A)$ the (classical)  solutions set and the minimal norm one will be denoted by $S(A; b), ~x_{LS}$, whereas in the inconsistent case ($b \notin \cR(A)$) the (least squares)  solutions set and the minimal norm one will be denoted by $LSS(A; b)$ and $x_{LS}$ too. 

\section{Consistent systems}
\label{consist}

Let $\tilde{A}: m \times n$, $\tilde{b} \in \R^m$, the consistent system $ \tilde{A} \tilde{x} = \tilde{b}$, $r=rank(\tilde{A}) \leq \min \{ m, n \}$ and $P$, $Q$ permutation matrices  such that 
\be
\label{1}
A =  P \tilde{A} Q =  \left[ 
		\begin{matrix}
		A_1  \\
		\dots \\
		A_p
		\end{matrix}
		\right], ~b=P \tilde{b}, ~x = Q^T \tilde{x},
\ee
with $A_1, \dots, A_p$ row blocks as in (\ref{0-33p}) (
details on such kind of transformations will be given in section \ref{rcm}). We have the equivalencies 
$$
 \tilde{A} \tilde{x} = \tilde{b} \Leftrightarrow ( P \tilde{A} Q) (Q^T \tilde{x}) = P \tilde{b} \Leftrightarrow 
$$
\be
\label{2}
A x = b,
\ee
therefore
\be
\label{200}
b \in \cR(A) ~~\textrm{and}~~ S(\tilde{A}; \tilde{b}) = Q(S(A; b)),
\ee
thus we will concentrate in what follows on the numerical solution of the system (\ref{2}), instead of the initial one $\tilde{A} \tilde{x} = \tilde{b}$. In \cite{duffsisc} the authors consider  an extended matrix $\ba: m \times \bar{n}$ 
	\be
\label{3}
\ba = [ A ~~\Gamma ] = 
		 \left[ 
		\begin{matrix}
		\ba_1  \\
		\dots \\
		\ba_p
		\end{matrix}
		\right] = \left[ 
		\begin{matrix}
		A_1 ~~\Gamma_1  \\
		\dots \\
		A_p ~~\Gamma_p
		\end{matrix}
		\right], ~~\bar{n}=n+q, 
\ee
	with $\Gamma: m \times q$, such that the row blocks $\ba_i$ are mutually orthogonal, i.e. 	
	\be
	\label{202}
	\ba_i \ba^T_j =0, ~~\forall i \neq j. 
	\ee 
Details on such kind of extensions will be given in section \ref{rcm}. The extended system
\be
\label{201}
\left[ 
		\begin{matrix}
		\bar{A} \\
		Y \\
		\end{matrix}
		\right] = 
\left[ 
		\begin{matrix}
		A & \Gamma \\
		0 & I_q \\
		\end{matrix}
		\right] \left[ 
		\begin{matrix}
		x \\
		y \\ 
		\end{matrix}
		\right] = \left[ 
		\begin{matrix}
		b \\
		0 \\ 
		\end{matrix}
		\right]
\ee
 has the same set of solutions as (\ref{2}), but   the blocks $\bar{A} = [ A ~~\Gamma ]$ and $Y = [ 0 ~~I_q ]$ are no more orthogonal. In order to overcome this aspect the authors consider in \cite{duffsisc} instead of (\ref{201}) the system
\be
\label{7}
\left[ 
		\begin{matrix}
		A & \Gamma \\
		B & S \\
		\end{matrix}
		\right] \left[ 
		\begin{matrix}
		x \\
		y \\ 
		\end{matrix}
		\right] = \left[ 
		\begin{matrix}
		b \\
		f \\ 
		\end{matrix}
		\right] .
		\ee
	We will show that, for an appropriate choice of the $q \times \bar{n}$ matrix $W = [ B ~~S ]$ and $f \in \R^q$ the system (\ref{7}) provides all the solutions of $Ax=b$. In this respect we need the following result (for the proof see  \cite{punt}, page 121, eq. (5.2)). 
\begin{proposition}
\label{puntm}
(i) If $E, F$ are two matrices with the same number of rows (say $\tau$) then 
\be
\label{p1}
rank([E ~~F]) = rank(E) ~+~ rank((I_{\tau} - E E^+)F).
\ee
(ii) If $E, F$ are two matrices such that $E: \tau \times \mu$, $F: \mu \times \delta$ then
\be
\label{p11p}
rank(EF) = rank(E) - dim(\cR(E^T) \cap \cN(F^T)).
\ee
\end{proposition}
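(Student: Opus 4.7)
The plan is to handle parts (i) and (ii) separately, using two fairly standard ingredients: the fact that $EE^+$ is the orthogonal projector onto $\mathcal{R}(E)$, and the dimension formula for the kernel of a product of linear maps.

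For part (i), I would argue on column spaces. The column space of $[E~~F]$ is $\mathcal{R}(E)+\mathcal{R}(F)$. Since $EE^+=P_{\mathcal{R}(E)}$ is the orthogonal projector onto $\mathcal{R}(E)$, the operator $I_\tau-EE^+$ is the orthogonal projector onto $\mathcal{R}(E)^\perp$. Hence $\mathcal{R}((I_\tau-EE^+)F)$ is precisely the orthogonal projection of $\mathcal{R}(F)$ onto $\mathcal{R}(E)^\perp$, and one checks that
\[
\mathcal{R}([E~~F]) = \mathcal{R}(E)\oplus \mathcal{R}((I_\tau-EE^+)F),
\]
the sum being orthogonal. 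Taking dimensions on both sides gives (\ref{p1}). The only (minor) point to verify carefully is that every vector in $\mathcal{R}(E)+\mathcal{R}(F)$ can indeed be written uniquely as a sum of an element of $\mathcal{R}(E)$ and an element of $\mathcal{R}((I_\tau-EE^+)F)$; this follows from writing $Ev+Fw=(Ev+EE^+Fw)+(I_\tau-EE^+)Fw$ and recalling that $EE^+F$ has columns in $\mathcal{R}(E)$.

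For part (ii), I would first prove the ``dual'' form
\[
\mathrm{rank}(EF) = \mathrm{rank}(F) - \dim\bigl(\mathcal{N}(E)\cap \mathcal{R}(F)\bigr)
\]
by applying the rank--nullity theorem to $E$ restricted to $\mathcal{R}(F)$: its image is $\mathcal{R}(EF)$ and its kernel is $\mathcal{N}(E)\cap\mathcal{R}(F)$, so the dimensions match. Then, using $\mathrm{rank}(EF)=\mathrm{rank}((EF)^T)=\mathrm{rank}(F^TE^T)$, I apply the identity just derived with $F^T$ in the role of $E$ and $E^T$ in the role of $F$, which yields
\[
\mathrm{rank}(F^TE^T)=\mathrm{rank}(E^T)-\dim\bigl(\mathcal{N}(F^T)\cap\mathcal{R}(E^T)\bigr),
\]
and this is exactly (\ref{p11p}) after using $\mathrm{rank}(E^T)=\mathrm{rank}(E)$.

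Neither part presents a serious obstacle; the main thing to be careful about is keeping the roles of ranges and null spaces straight when transposing in (ii), and verifying the orthogonal direct-sum decomposition in (i). Since the result is cited from \cite{punt}, I would present this as a short self-contained verification rather than reproducing the original derivation.
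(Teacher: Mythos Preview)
Your argument is correct in both parts: the orthogonal direct-sum decomposition $\mathcal{R}([E~~F])=\mathcal{R}(E)\oplus\mathcal{R}((I_\tau-EE^+)F)$ is verified exactly as you indicate, and the rank--nullity manoeuvre for (ii), followed by transposition, is clean and accurate. Note that the paper does not actually prove this proposition; it merely quotes it from \cite{punt} (page~121, eq.~(5.2)) as a tool for the later Lemma~\ref{lem1p}, so there is no ``paper's own proof'' to compare against --- your self-contained verification is therefore a genuine addition rather than a reworking of anything in the text.
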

Let also 	$P = P_{\cR(\ba^T)}$, which under the mutual  orthogonality hypothesis (\ref{202}) is given by (see e.g.  \cite{horn})
	\be
\label{4}
P = \bar{A}^+ \bar{A} = \sum_{i=1}^p P_{\cR(\ba_i^T)} ~\textrm{with}~ 
 P_{\cR(\ba_i^T)} =  \ba^+_i \ba_i.
\ee
\begin{proposition}
\label{lem1}
(i) If
\be
\label{5}
 W = Y (I -P),
\ee
with $Y = [ 0 ~I_q ]$ (see (\ref{201})), 
then the row block $W = [ B ~~S ]$ is orthogonal on $\ba$, hence on each row block $\ba_i, i=1, \dots, p$.\\
(ii) We have the equalities
\be
\label{6}
\left[ 
		\begin{matrix}
		\ba \\
		W
		\end{matrix}
		\right]^+ = [ \ba^+  ~W^+ ],  ~\textrm{and}
		\ee
		\be
\label{6p}
		W W^T = B B^T + S^2 = S, ~\textrm{where}~  S = Y (I - P ) Y^T.
\ee
(iii) Let us suppose that the matrix $S$ from (\ref{6p}) is invertible and  $f$ is given by
\be
\label{5p}
f=-Y \ba^+ b.
\ee
Then, if $x$ a solution of (\ref{2}), the vector $\left[ 
		\begin{matrix}
		x \\
		0 \\ 
		\end{matrix}
		\right]$ is a solution of (\ref{7}). Conversely, if 
$\left[ 
		\begin{matrix}
		x \\
		y \\ 
		\end{matrix}
		\right]$ is the minimal norm solution of the system (\ref{7}), then $y=0$ and $x$ is a solution of (\ref{2}).
\end{proposition}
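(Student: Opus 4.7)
\textbf{Plan for the proof of Proposition \ref{lem1}.}

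\textbf{Part (i).} This is the shortest step. Since $P=P_{\cR(\bar A^T)}$, every column of $\bar A^T$ lies in $\cR(\bar A^T)$, hence $P\bar A^T=\bar A^T$ and $(I-P)\bar A^T=0$. Therefore $W\bar A^T=Y(I-P)\bar A^T=0$, which gives orthogonality with each row block $\bar A_i$ by the block decomposition in (\ref{0-4p}).

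\textbf{Part (ii).} For the first equality I plan to invoke the standard rule that stacking matrices with mutually row-orthogonal ranges preserves the pseudoinverse: if $M_1M_2^T=0$ then $\bigl[\begin{smallmatrix}M_1\\ M_2\end{smallmatrix}\bigr]^+=[M_1^+\ M_2^+]$. One verifies this by checking the four Moore-Penrose axioms; the key identity is that $\cR(M_2^+)\subseteq\cR(M_2^T)\perp\cR(M_1^T)$, so $M_1M_2^+=0$ and $M_1^+AM_2^+=0$ on the relevant blocks, and both $MN$ and $NM$ come out symmetric idempotents. This applies to $M_1=\bar A$, $M_2=W$ by part (i). For the second equality, I would use that $I-P$ is a symmetric idempotent, so $WW^T=Y(I-P)(I-P)Y^T=Y(I-P)Y^T=S$, and then the block form $W=[B\ S]$ gives $WW^T=BB^T+SS^T$; showing the $(2,2)$-block of $W$ is indeed $Y(I-P)Y^T$ (and therefore symmetric) is a direct block-matrix computation.

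\textbf{Part (iii), forward direction.} I would first identify $B$ explicitly. Writing $\bar A^+$ as a block column $\bigl[\begin{smallmatrix}(\bar A^+)_1\\ (\bar A^+)_2\end{smallmatrix}\bigr]$ with $(\bar A^+)_2=Y\bar A^+$, the $(2,1)$-block of $P=\bar A^+\bar A$ is $Y\bar A^+A$, so the partition of $W=Y(I-P)$ yields $B=-Y\bar A^+A$. Then if $Ax=b$, computing the two block rows of (\ref{7}) at $\bigl[\begin{smallmatrix}x\\ 0\end{smallmatrix}\bigr]$ gives $Ax=b$ (trivially) and $Bx=-Y\bar A^+Ax=-Y\bar A^+b=f$.

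\textbf{Part (iii), converse.} The main idea is to compute the minimal norm solution of (\ref{7}) explicitly using part (ii). Setting $M=\bigl[\begin{smallmatrix}\bar A\\ W\end{smallmatrix}\bigr]$, the forward part already shows $\bigl[\begin{smallmatrix}b\\ f\end{smallmatrix}\bigr]\in\cR(M)$, so the minimal norm solution is $M^+\bigl[\begin{smallmatrix}b\\ f\end{smallmatrix}\bigr]=\bar A^+b+W^+f$ by (\ref{6}). Since $S$ is invertible, $W$ has full row rank and $W^+=W^T S^{-1}=\bigl[\begin{smallmatrix}B^T S^{-1}\\ I_q\end{smallmatrix}\bigr]$. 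Using the block form of $\bar A^+b$ together with $f=-Y\bar A^+b$, the second block of $\bar A^+b+W^+f$ collapses to $-f+f=0$, giving $y=0$. Then the first block row of (\ref{7}) immediately yields $Ax=b$.

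The main obstacle I anticipate is bookkeeping rather than conceptual difficulty: one must keep the block decompositions of $\bar A^+$, $P$, and $W$ consistent in order to identify $B$ with $-Y\bar A^+A$ and $S$ with $Y(I-P)Y^T$ simultaneously, so that the two uses of ``$S$'' (the $(2,2)$-block of $W$ and the matrix $WW^T$) coincide. Once that bookkeeping is clean, everything else is a short algebraic manipulation.
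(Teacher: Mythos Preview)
Your plan is correct and follows essentially the same route as the paper: part (i) via $(I-P)\bar A^T=0$, part (ii) by verifying the four Penrose conditions and using idempotence of $I-P$, and part (iii) by computing the minimal norm solution as $\bar A^+b+W^+f$ with $W^+=W^TS^{-1}$ and observing that the second block cancels. The only cosmetic difference is that for the forward direction you first isolate $B=-Y\bar A^+A$ and then evaluate $Bx$, whereas the paper evaluates $W\left[\begin{smallmatrix}x\\0\end{smallmatrix}\right]$ as a whole; the underlying computation is identical.
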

	\begin{proof}
	(i) We have from (\ref{4})-(\ref{5})
	\be
	\label{7p}
	\ba W^T = \ba [ B ~~S ]^T = \ba (I - P)^T Y^T = \ba P_{\cN(\ba^T)} Y^T = 0.
	\ee
	(ii) We show by simple computations, also using (\ref{7p}) that the matrices $\left[ 
		\begin{matrix}
		\ba \\
		W
		\end{matrix}
		\right]$ and $[ \ba^+  ~W^+ ]$ satisfy the four Penrose equalities, which uniquely characterize the Moore-Penrose pseudoinverse (see e.g. \cite{horn}). The  equalities in (\ref{6p}) are proved in \cite{duffsisc}.\\
(iii)  If $x$ is a solution of (\ref{2}), then $Ax=b$ and we have (see (\ref{7}), (\ref{6p}), (\ref{5p}) and  \cite{duffsisc})
$$
\left[ 
		\begin{matrix}
		A & \Gamma \\
		B & S \\
		\end{matrix}
		\right] \left[ 
		\begin{matrix}
		x \\
		0 \\ 
		\end{matrix}
		\right] = \left[ 
		\begin{matrix}
		Ax \\
		W \left[ 
		\begin{matrix}
		x \\
		0 \\ 
		\end{matrix}
		\right] \\ 
		\end{matrix}
		\right] = \left[ 
		\begin{matrix}
		b \\
		Y(I-P) \left[ 
		\begin{matrix}
		x \\
		0 \\ 
		\end{matrix}
		\right] \\ 
		\end{matrix}
		\right] = $$
		$$
		\left[ 
		\begin{matrix}
		b \\
		-Y \bar{A}^+ \bar{A} \left[ 
		\begin{matrix}
		x \\
		0 \\ 
		\end{matrix}
		\right] \\ 
		\end{matrix}
		\right] = \left[ 
		\begin{matrix}
		b \\
		-Y \bar{A}^+ b \\ 
		\end{matrix}
		\right] = \left[ 
		\begin{matrix}
		b\\
		f \\ 
		\end{matrix}
		\right],
$$
with $f$ from (\ref{5p}), which completes the first part of the proof. \\Let now $\left[ 
		\begin{matrix}
		x \\
		y
		\end{matrix}
		\right]$ be the minimal norm solution of (\ref{7}) with $f$ from (\ref{5p}).  Hence (see e.g. \cite{popabook} and (\ref{6})) 
			\be
	\label{8}
		\left[ 
		\begin{matrix}
		x \\
		y
		\end{matrix}
		\right] = \left[ 
		\begin{matrix}
		\ba \\
		W
		\end{matrix}
		\right]^+ \left[ 
		\begin{matrix}
		b \\
		f
		\end{matrix}
		\right]  = [ \ba^+  ~W^+ ] \left[ 
		\begin{matrix}
		b \\
		f
		\end{matrix}
		\right] =  \ba^+ b + W^+ f.
		\ee
	According to our hypothesis on the matrix $S$ it results that $W^T$ has full column rank, therefore (see again \cite{popabook}) 
	$$
	W^+ = W^T (W W^T)^{-1} = (I-P) Y^T S^{-1} = W^T S^{-1}.
	$$
	Then
	$$
	W^+ f = W^T S^{-1} f = \left[ 
		\begin{matrix}
		B^T \\
		S^T \\ 
		\end{matrix}
		\right] S^{-1} f = \left[ 
		\begin{matrix}
		B^T S^{-1} f\\
		f \\ 
		\end{matrix}
		\right],
	$$
	and from (\ref{8}) and (\ref{5}) we obtain ($\bar{n} = n + q$;  see (\ref{3}))
	$$
	\ba^+ b + W^+ f = \left[
	\begin{matrix}
	I_n & 0\\
	0 & I_q \\
	\end{matrix}
	\right ]
	\ba^+ b  + W^+ f= 
	\left[ 
		\begin{matrix}
		[I_n ~0] \ba^+ b\\
		[0 ~I_q] \ba^+ b\\ 
		\end{matrix}
		\right] + \left[ 
		\begin{matrix}
		B^T S^{-1} f\\
		f \\ 
		\end{matrix}
		\right]   = 
	$$
	\be
	\label{9}
\left[ 
		\begin{matrix}
		[I_n ~0] \ba^+ b\\
		Y \ba^+ b \\ 
		\end{matrix}
		\right] + \left[ 
		\begin{matrix}
		B^T S^{-1} f\\
		-Y \ba^+ b \\ 
		\end{matrix}
		\right] = \left[ 
		\begin{matrix}
		[I_n ~0] \ba^+ b + B^T S^{-1} f\\
		0 \\ 
		\end{matrix}
		\right] .
	\ee
	The equalities (\ref{8}) and (\ref{9}) give us $y=0$,  hence 
	 the minimal norm solution of the (consistent) system (\ref{7}) has the form $\left[ 
		\begin{matrix}
		x \\
		0 \\ 
		\end{matrix}
		\right]$. In particular we have 
	$$\left[ 
		\begin{matrix}
		b \\
		f \\ 
		\end{matrix}
		\right] = \left[ 
		\begin{matrix}
		A & \Gamma \\
		B & S \\
		\end{matrix}
		\right] \left[ 
		\begin{matrix}
		x \\
		0 \\ 
		\end{matrix}
		\right] = \left[ 
		\begin{matrix}
		A x \\
		B x \\ 
		\end{matrix}
		\right],
	$$
	i.e. $b = Ax$ which tell us that $x$ is a solution of the system (\ref{2}) and completes the proof.
	\end{proof}
Concerning the assumption on the invertibility of the matrix $S$ from (\ref{6p}) we have the following result. 
	\begin{lemma}
\label{lem1p}
If $m \leq n$ and the matrix $A$ from (\ref{1}) has full row rank, then $S$ is invertible.
\end{lemma}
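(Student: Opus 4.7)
The plan is to show that the symmetric $q\times q$ matrix $S=Y(I-P)Y^T$ has trivial kernel, which for a square matrix yields invertibility. First I would observe that since $P=P_{\cR(\ba^T)}$, the complementary matrix $I-P$ is the orthogonal projector onto $\cN(\ba)$, and in particular is symmetric and idempotent. Consequently, for every $u\in\R^q$,
\be
u^T S u \;=\; u^T Y(I-P)(I-P)Y^T u \;=\; \p (I-P)Y^T u \p^2,
\ee
so $S$ is positive semidefinite, and the equation $Su=0$ is equivalent to $(I-P)Y^T u = 0$, that is, to $Y^T u \in \cR(P) = \cR(\ba^T)$.

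The second step is to translate this membership into a concrete system. With $Y=[0~I_q]$, the vector $Y^T u\in\R^{\bar n}$ has its first $n$ entries equal to zero and its last $q$ entries equal to $u$, and it lies in $\cR(\ba^T)$ iff there exists $v\in\R^m$ with $\ba^T v = Y^T u$. Splitting $\ba^T$ according to the block decomposition $\ba=[A~\Gamma]$, this membership is precisely the pair of conditions
\be
A^T v = 0, \qquad \Gamma^T v = u.
\ee

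The final step invokes the hypotheses of the lemma. Since $m\le n$ and $A$ has full row rank $m$, the matrix $A^T$ has full column rank, so $\cN(A^T)=\{0\}$. The first equation above then forces $v=0$, and the second yields $u=\Gamma^T v=0$. Hence $\cN(S)=\{0\}$, and $S$ is invertible.

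I do not anticipate any serious obstacle: the only nontrivial reduction is the identification of $\cN(S)$ with the set of $u\in\R^q$ for which $Y^T u\in\cR(\ba^T)$, which follows immediately from $I-P$ being an orthogonal projector. After that reduction, the full-row-rank assumption on $A$ closes the argument in a single line, and in particular the conclusion does not depend on the specific choice of the augmenting block $\Gamma$.
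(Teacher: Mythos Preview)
Your proof is correct and coincides in spirit with the paper's \emph{Version 2}: both reduce invertibility of $S$ to showing that $\cR(Y^T)\cap\cR(\ba^T)=\{0\}$, and then use the full row rank of $A$ to force $A^T v=0\Rightarrow v=0$, whence $u=\Gamma^T v=0$. The only difference is packaging: the paper reaches that intersection condition via the rank identity $rank(Y(I-P))=rank(Y)-\dim(\cR(Y^T)\cap\cN((I-P)^T))$ from Proposition~\ref{puntm}(ii), whereas you bypass the lemma by exploiting directly that $I-P$ is a symmetric idempotent, so $u^TSu=\p(I-P)Y^Tu\p^2$ and $Su=0\Leftrightarrow Y^Tu\in\cR(\ba^T)$. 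The paper also gives a \emph{Version 1} that writes out $P=\ba^T(\ba\ba^T)^{-1}\ba$ explicitly (using that $\ba$ inherits full row rank from $A$) and solves the resulting $2\times 2$ block system; your route avoids that computation entirely. Your argument is the most economical of the three.
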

\begin{proof}
\textbf{Version 1.} According to the equality $S=W W^T$ (see (\ref{6p})) we get invertibility for $S: q \times q$ if the matrix $W^T: (n+q) \times  q$ (see (\ref{3}))  has full column rank. In this respect, let us suppose that $W^T z = 0$, for some $z \in \R^q$. But, because $A$ has full row rank also the matrix $\bar{A}$ will have full row rank, then (see e.g \cite{popabook})   
\be
\label{100}
\bar{A}^+ = \bar{A}^T (\bar{A} \bar{A}^T)^{-1}, ~~P= \bar{A}^+ \bar{A} = \bar{A}^T (\bar{A} \bar{A}^T)^{-1} \bar{A}.
\ee
Therefore, from (\ref{3}), (\ref{5}), (\ref{100}) and (\ref{4}) we obtain
$$
W^T z =0 \Leftrightarrow (I-P) Y^T z=0 \Leftrightarrow (I-P) \left[ 
		\begin{matrix}
		0 \\
		z \\ 
		\end{matrix}
		\right] = 0 \Leftrightarrow 
$$
$$
\left[ 
		\begin{matrix}
		0 \\
		z \\ 
		\end{matrix}
		\right] - \left[ 
		\begin{matrix}
		A^T \\
		\Gamma^T \\ 
		\end{matrix}
		\right] (A A^T + \Gamma \Gamma^T)^{-1} [ A ~~\Gamma ] \left[ 
		\begin{matrix}
		0 \\
		z \\ 
		\end{matrix}
		\right] = \left[ 
		\begin{matrix}
		0 \\
		0 \\ 
		\end{matrix}
		\right] \Leftrightarrow
$$
\be
\label{101}
 \left\{\begin{array}{c}
A^T (A A^T + \Gamma \Gamma^T)^{-1} \Gamma z = 0 \\
z - \Gamma^T (A A^T + \Gamma \Gamma^T)^{-1} \Gamma z = 0 \\
\end{array}\right.
\ee
But, from our hypothesis the matrix $A^T$ has full column rank, thus from the first equation in (\ref{101}) we get $\Gamma z = 0$, which gives us $z=0$ from the second equation.\\
\textbf{Version 2.} We will apply the result in (\ref{p11p}) for $E=Y: q \times \bar{n}$, $F=I-P: \bar{n} \times \bar{n}$ by first observing that
$$
\cN(F^T) = \cN(I-P)= \{ x \in \R^{\bar{n}}, Px=x \} = \{  x \in \R^{\bar{n}}, P_{\cR(\bar{A}^T)}(x) = x \} = \cR(\bar{A}^T),
$$
which gives us in  (\ref{p11p}) (see also (\ref{5})) 
$$
rank(W)=rank(Y(I-P))= rank(Y)-dim(\cR(Y^T) \cap \cR(\bar{A}^T)) = 
$$
\be
\label{p11bis}
q - dim(\cR(Y^T) \cap \cR(\bar{A}^T)).
\ee
Let now $y \in \R^{\bar{n}}$ be such that (see also (\ref{3}))
$$
y \in \cR(Y^T) \cap \cR(\bar{A}^T) = \cR( \left[ 
		\begin{matrix}
		0 \\
		I_q \\ 
		\end{matrix}
		\right] ) \cap \cR( \left[ 
		\begin{matrix}
		A^T \\
		\Gamma^T \\ 
		\end{matrix}
		\right] ).
$$
Then, for some $x \in \R^q,$ $y =  \left[ 
		\begin{matrix}
		0 \\
		x \\ 
		\end{matrix}
		\right] \in \cR( \left[ 
		\begin{matrix}
		A^T \\
		\Gamma^T \\ 
		\end{matrix}
		\right] )$, i.e. it exists $z \in \R^m$ such that
		\be
		\label{221p}
		\left[ 
		\begin{matrix}
		0 \\
		x \\ 
		\end{matrix}
		\right] =  \left[ 
		\begin{matrix}
		A^T z \\
		\Gamma^T z\\ 
		\end{matrix}
		\right] \Leftrightarrow A^T z=0, ~\Gamma^T z =x.
		\ee
		But, as $A$ has full row rank, $A^T$ has full column rank, therefore from $A^T z = 0$ we get $z=0$, thus $y=0$, i.e.
		\be
		\label{222p}
		dim(\cR(Y^T) \cap \cR(\bar{A}^T)) = 0,
		\ee
		which together with (\ref{p11bis}) gives us $rank(W) = q$. But, from (\ref{6p}) we have that $rank(S)=rank(W W^T) = rank(W)$, i.e. $S$ is invertible.
\end{proof} 
	\begin{remark}
	\label{rrem}
	(i) Version 2 of the above proof gives us the possibility to formulate the following conjecture:
	
	\vspace*{0.2cm}
	\textbf{If $1 \leq rank(\bar{A}) = r < m$, it exists $y \in \cR(Y^T) \cap \cR(\bar{A}^T)$, $y \neq 0$.} 
	
		\vspace*{0.2cm}
	This would give us that $dim(\cR(Y^T) \cap \cR(\bar{A}^T)) \geq 1$, therefore 
	$rank(W) = rank(S) \leq q-1$, i.e. the matrix $S$ is no more invertible.\\
(ii) Unfortunately, we  do not  have a proof of the fact that the  converse is  true (e.g. that $S$ is not invertible iff $\bar{A}$ does not have full row rank).\\ 
(iii) Moreover, because the matrices $Y=[0 ~~I_q]$ and $\bar{A} = [A ~~\Gamma]$ are connected only by the dimension $q$ of the block $I_q$ (which corresponds to the number of columns in the block $\Gamma$), we have to consider the special constructions of $\Gamma$ from section \ref{rcm} (see also \cite{duffsisc}) which give us additional information.
		\end{remark}
\section{Inconsistent systems}
\label{sec2}	
	If the initial system  $\tilde{A} \tilde{x} = \tilde{b}$ is no more consistent it must be reformulated in the least squares sense as 
\be
\label{205}
\p \tilde{A} \tilde{x} - \tilde{b} \p = \min !
\ee 
Moreover, because the matrices $P$ and $Q$ in (\ref{1}) are orthogonal we have the equivalences 
$$
\p \tilde{A} \tilde{x} - \tilde{b} \p = \min !  ~\Leftrightarrow~ 
\p ( P \tilde{A} Q) (Q^T \tilde{x}) - P \tilde{b} \p = \min !  ~\Leftrightarrow~ 
$$
\be
\label{2p}
\p A x - b \p =\min ! 
\ee
which give us (see also \cite{popabook})
\be
\label{206}
LSS(\tilde{A}; \tilde{b}) = Q(LSS(A; b)).
\ee
Unfortunately, in this case we cannot adapt the results from section \ref{consist} because the extended system (\ref{201}) must be reformulated in a least squares sense which is no more equivalent with the problems in (\ref{2p}). For this reason we must use one of the  two consistent (sparse) equivalent formulations, namely: 
\begin{itemize}
\item[(i)] \textbf{Augmented system} (see e.g. \cite{mario})
\be
\label{0-4}
\left[ 
		\begin{matrix}
		I & \tilde{A} \\
		\tilde{A}^T & 0 \\
		\end{matrix}
		\right] \left[ 
		\begin{matrix}
		\tilde{r} \\
		\tilde{x} \\ 
		\end{matrix}
		\right] = \left[ 
		\begin{matrix}
		\tilde{b} \\
		0 \\ 
		\end{matrix}
		\right] ~~{\widehat{\Leftrightarrow}}~~  
		\left[ 
		\begin{matrix}
		I & {A} \\
		{A}^T & 0 \\
		\end{matrix}
		\right] \left[ 
		\begin{matrix}
		r \\
		x \\ 
		\end{matrix}
		\right] = \left[ 
		\begin{matrix}
		{b} \\
		0 \\ 
		\end{matrix}
		\right] .
\ee
\begin{remark}
\label{may16}
The above equivalence ${\widehat{\Leftrightarrow}}$ has to be understood in the following sense.
\begin{itemize}
\item[1.]  If $[ \tilde{r} ~\tilde{x} ]^T$ is a solution of the left system in (\ref{0-4}), we know that $\tilde{A}^T \tilde{r} = 0$ and $\tilde{A}^T \tilde{A} \tilde{x} = \tilde{A}^T \tilde{b}$, then $\tilde{x} \in LSS(\tilde{A}; \tilde{b})$, i.e. (see (\ref{206})) $x = Q^T \tilde{x} \in LSS(A; b)$; then $A^T A x = A^T b$, and if we define $r=b-Ax$ it results that $[r ~x ]^T$ is a solution of the right system in (\ref{0-4}).

\vspace*{0.2cm}
\item[2.]  If $[ {r} ~{x} ]^T$ is a solution of the right  system in (\ref{0-4}), we know that ${A}^T {r} = 0$ and ${A}^T {A} {x} = {A}^T {b}$, then ${x} \in LSS({A}; {b})$, i.e. (see (\ref{206})) $\tilde{x} = Q {x} \in LSS(\tilde{A}; \tilde{b})$; then $\tilde{A}^T \tilde{A} \tilde{x} = \tilde{A}^T \tilde{b}$, and if we define $\tilde{r}=\tilde{b} - \tilde{A} \tilde{x}$ it results that $[\tilde{r} ~\tilde{x} ]^T$ is a solution of the left  system in (\ref{0-4}).
\end{itemize}
\end{remark}
\item[(ii)] \textbf{Consistent right hand side} (see e.g. \cite{popabook}, (\ref{206}) and Remark \ref{may16} above)
\be
\label{0-3}
\tilde{A} \tilde{x} = \tilde{b}_{\tilde{A}} ~\Leftrightarrow~ 
\tilde{x} \in LSS(\tilde{A}; \tilde{b}) ~\Leftrightarrow~ 
x = Q^T \tilde{x} \in LSS(A; b) ~\Leftrightarrow~ 
Ax = {b}_{{A}},
\ee
where
\be
\label{0-3p}
\tilde{b}_{\tilde{A}} = P_{\cR(\tilde{A})}(\tilde{b}), ~~b_A =  P_{\cR(A)}(b).
\ee
\end{itemize} 
The method from section \ref{consist} can be directly adapted for computing the minimal norm solution $x$ for the second system in  (\ref{0-4}), and then constructing a solution $\tilde{x}$ through the equality $\tilde{x} = Qx$ (see (\ref{1}) and section \ref{rcm} for details related to the application of Cuthill-McKee (CM) algorithm to the matrix from (\ref{0-4})).   \\
Concerning the second approach, we first need to compute
 (see e.g. \cite{popabook}) 
\be
\label{205p}
b_A = P_{{A}}({b}) = A {A}^+ {b},
\ee
then solve the consistent system $Ax=b_A$ and get the minimal norm solution $x$ and then construct a solution $\tilde{x}$ through the equality $\tilde{x} = Qx$ (see (\ref{1})). \\
Computing $b_A$ is a difficult task, work is in progress  on this subject and it will be presented in a future paper. In the present one we will only observe that $P_{{\bar{A}}}({b})$ can be computed in parallel, using the same block structure  (\ref{3}) of the matrix $\bar{A}$, and  the orthogonality property (\ref{4}).
\begin{proposition}
\label{lem2}
(i) Let $\ba_i$ be the blocks from (\ref{3}) and 
\be
\label{10}
B_i = \ba^T_i, ~~B_i^T B_j =0, i\neq j, ~~B=[B_1, B_2, \dots, B_p].
\ee
Then
\be
\label{11}
P_{\cR(B)} = \sum_{i=1}^p P_{\cR(B_i)}, ~~B_i^+ B_j = 0, ~~ B_i B^+_j =0, ~~\forall i \neq j.
\ee
(ii) Let $\ba_i: r_i \times \bar{n}, r_1 + \dots r_p = m$, and for $z \in \R^m$ we denote by $z^i \in \R^{r_i}$ the corresponding subvector, i.e.
$$
z = \left[ 
		\begin{matrix}
		z^1 \\
		\dots \\
		z^p \\ 
		\end{matrix}
		\right] \in \R^{r_1} \times \dots \times \R^{r_p}.
$$
Then
\be
\label{12}
P_{\cR(B^T)}(z) = P_{\cR(\ba)}(z) = \left[ 
		\begin{matrix}
		B^+_1 B_1 z^1 \\
		\dots \\
		B^+_p B_p z^p \\ 
		\end{matrix}
		\right] .
\ee
\end{proposition}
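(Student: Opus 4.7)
The plan is to exploit the orthogonality hypothesis (\ref{202}) in two complementary ways: through the column picture via $B_i = \ba_i^T$, and through the block-direct-sum structure of $\cR(\ba) \subset \R^m$.

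For part (i), the identity $B_i^T B_j = \ba_i \ba_j^T = 0$ for $i\neq j$ is a direct restatement of (\ref{202}), so the column ranges $\cR(B_i) \subset \R^{\bar n}$ are pairwise orthogonal. Since $\cR(B) = \cR(B_1) + \dots + \cR(B_p)$ is then an orthogonal direct sum, the projector onto it splits into the sum of the individual projectors, yielding $P_{\cR(B)} = \sum_{i=1}^p P_{\cR(B_i)}$. For the pseudoinverse relations I would use the fact that $B_i^+$ annihilates $\cR(B_i)^\perp$ (equivalently, $B_i^+ = B_i^+ P_{\cR(B_i)}$). Since every column of $B_j$ lies in $\cR(B_j) \subseteq \cR(B_i)^\perp$ when $i \neq j$, this gives $B_i^+ B_j = 0$; the companion identity $B_i B_j^+ = 0$, which strictly speaking requires reading the product modulo dimensional compatibility (as the transposed analogue $B_j^+ B_i = 0$), follows by the same orthogonality argument.

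For part (ii), the equality $P_{\cR(B^T)} = P_{\cR(\ba)}$ is immediate since $B^T = \ba$ by the very definition of $B$ in (\ref{10}). The real content is the block-diagonal form on the right of (\ref{12}), which I would derive from the orthogonal decomposition
$$
\cR(\ba) \;=\; \cR(\ba_1) \oplus \cR(\ba_2) \oplus \dots \oplus \cR(\ba_p)
$$
viewed inside $\R^m = \R^{r_1} \times \dots \times \R^{r_p}$. The inclusion $\subseteq$ is clear from $\ba u = (\ba_1 u, \ldots, \ba_p u)^T$. For the reverse inclusion, given $v_i \in \cR(\ba_i)$ for each $i$, I would set $u^{(i)} = \ba_i^+ v_i \in \cR(\ba_i^T)$ and $u = \sum_i u^{(i)}$. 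Writing $u^{(i)} = \ba_i^T w^{(i)}$ for some $w^{(i)}$, we obtain $\ba_j u^{(i)} = (\ba_j \ba_i^T) w^{(i)} = 0$ for $i \neq j$ by (\ref{202}), while $\ba_j u^{(j)} = \ba_j \ba_j^+ v_j = v_j$ since $v_j \in \cR(\ba_j)$. Hence $\ba u = (v_1, \ldots, v_p)^T$, as required. The projector then acts componentwise on $z = (z^1, \ldots, z^p)^T$, and each block projection equals $P_{\cR(\ba_i)}(z^i) = B_i^+ B_i z^i$ via the identity $B_i^+ B_i = P_{\cR(B_i^T)} = P_{\cR(\ba_i)}$.

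The main obstacle is the non-trivial $\supseteq$ inclusion: producing a single common preimage $u \in \R^{\bar n}$ that realises a prescribed tuple $(v_1, \ldots, v_p)$ simultaneously in every block is exactly where (\ref{202}) does its work, decoupling the action of $\ba$ on the blocks via the mutual orthogonality of the row spaces $\cR(\ba_i^T)$. Once this decoupling is visible, the remaining assertions of the proposition follow by routine manipulation.
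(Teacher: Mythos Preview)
Your proof is correct, and in several places cleaner than the paper's, but the route is genuinely different in part (ii), so a brief comparison is in order.

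For part (i), the paper proves $B_i^+ B_j = 0$ via the SVD: writing $U^T B_i V = \Sigma$, it observes that $B_i^T B_j = 0$ forces $\Sigma^T (U^T B_j)=0$, and since $\Sigma^+$ has the same zero pattern as $\Sigma^T$ this yields $\Sigma^+(U^T B_j)=0$, hence $B_i^+ B_j = V \Sigma^+ U^T B_j = 0$. Your argument via $B_i^+ = B_i^+ P_{\cR(B_i)}$ and $\cR(B_j)\subseteq \cR(B_i)^\perp$ reaches the same conclusion without introducing the SVD machinery; either way is fine. Your remark about the dimensional awkwardness of $B_i B_j^+$ is well taken --- the paper also glosses over that identity (``similar arguments are available'') and never actually uses it.

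For part (ii), the paper takes an algebraic route: it first establishes the block formula
\[
B^+ \;=\; \begin{bmatrix} B_1^+ \\ \vdots \\ B_p^+ \end{bmatrix}
\]
(checking the Penrose conditions from the mutual orthogonality), and then computes $P_{\cR(\ba)} = B^+ B$ as a block matrix whose off-diagonal blocks $B_i^+ B_j$ vanish by part (i), leaving $\mathrm{diag}(B_1^+ B_1,\dots,B_p^+ B_p)$. Your argument is geometric instead: you identify $\cR(\ba)$ with the \emph{external} product $\cR(\ba_1)\times\cdots\times\cR(\ba_p)$ inside $\R^{r_1}\times\cdots\times\R^{r_p}$, constructing the nontrivial preimage $u=\sum_i \ba_i^+ v_i$ and using (\ref{202}) to decouple the blocks. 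Once the subspace is a coordinate-block product, the projector is automatically block-diagonal. The paper's route has the side benefit of exhibiting $B^+$ explicitly, which could be reused elsewhere; your route is more self-contained and makes the role of the orthogonality hypothesis (\ref{202}) more transparent, since it is precisely what lets a single $u\in\R^{\bar n}$ hit an arbitrary tuple $(v_1,\dots,v_p)$.
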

	\begin{proof}
	(i) The first equality in (\ref{11}) results from the definition (\ref{10}) of the blocks $B_i$ and the first equality in (\ref{4}) (also observe that from (\ref{10}) and (\ref{3}) we obtain $B=A^T$). 
	 We will prove only the second equality in (\ref{11}); for the third one, similar arguments are available. Let 
	$$U^T B_i V = \Sigma = diag(\sigma_1, \dots, \sigma_r) $$
	be a singular value decomposition of $B_i$. Then 
	\be
	\label{15}
	B_i^T = V \Sigma^T U^T, ~~B^+_i = V \Sigma^+ U^T.
	\ee
	Therefore, from the hypothesis $B^T_i B_j =0$ it results 
	$V \Sigma^T U^T B_j = 0$, hence (because $V$ is invertible) 
	$\Sigma^T (U^T B_j) = 0.$ But, because the matrices $\Sigma^T$ and $\Sigma^+$ have the same dimensions and structure, we also have 
	$\Sigma^+ (U^T B_j) = 0$, which according to (\ref{15}) gives us $B^+_i Bj =0$.
	\\
	 (ii) We know that (see e.g. \cite{horn})
	 \be
	 \label{13}
	 P_{\cR(\ba)} = P_{\cR(B^T)} = B^+ B.
	 \ee
	But, from the mutual orthogonality of the blocks $B_i, i=1, \dots, p$ we obtain by direct computation that
	 \be
	 \label{14}
	 B^+ = \left[ 
		\begin{matrix}
		B^+_1  \\
		\dots \\
		B^+_p  \\ 
		\end{matrix}
		\right] .
	 \ee
	Then, according to (\ref{13}) - (\ref{14}) and the formula for the product $B^+ B$  we get
	$
	P_{\cR(B^T)}(z) = 	P_{\cR(\bar{A})}(z) = B^+ B z = $
	$$
	\left[ 
		\begin{matrix}
		B^+_1 B_1 & 0 & \dots & 0\\
		0 & B^+_2 B_2 & \dots & 0 \\
		\dots & \dots & \dots & \dots \\
	0 & 0 & \dots &	B^+_p B_p  \\ 
		\end{matrix}
		\right] \left[ 
		\begin{matrix}
		z^1  \\
		z^2 \\
		\dots \\
		z^p  \\ 
		\end{matrix}
		\right] = \left[ 
		\begin{matrix}
		B^+_1 B_1 z^1 \\
		\dots \\
		B^+_p B_p z^p \\ 
		\end{matrix}
		\right]
	$$
			and the proof is complete.
	\end{proof}
\section{Constructing the matrix $A$ from (\ref{1})}
\label{rcm}

As we already mentioned in section \ref{consist} we will present in this section some procedures to construct a row block splitting of $\tilde{A}$ as in (\ref{1}). This also applies (in the inconsistent case  (\ref{2p})) for the augmented system equivalent formulation (\ref{0-4}) right. 
	Let $\tilde{A}$ be an ${m \times n}$  sparse rectangular matrix. We can attach to it the bipartite graph $G=(E,R,C)$, with $R=\left\lbrace 1,2,...,m \right\rbrace $ the set of nodes denoting row indices, $C=\left\lbrace 1,2,...,n \right\rbrace $ the set of nodes denoting column indices and 
    
    \begin{equation*}
         E=\left\lbrace (i,j) \ | \ \tilde{A}_{i,j} \neq 0, \ i \in R, \ j \in C \right\rbrace 
    \end{equation*}
     the set of edges. The bipartite graph associated to $ A $ can be seen in Fig. \eqref{fig:bipartite}  We note that the following considerations assume the graph is connected. If that is not the case, the procedure can be applied to every connected component of the graph.
     
	\begin{figure}[h]
        \centering
         \includegraphics[width=.3\textwidth]{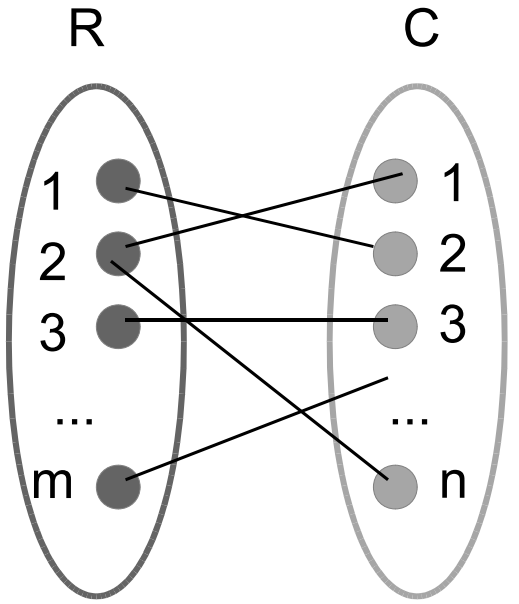}
         \caption{Bipartite graph}
         \label{fig:bipartite}
     \end{figure}
		
	The adjacency relation in the bipartite graph can be defined as follows:
    
	\begin{equation} \label{bip1}
	\nexists \ (i,j) \ | \ i,j \in R, \quad	\nexists \ (i,j) \ | \ i,j \in C,
	\end{equation}
	
	\begin{equation} \label{bip2}
	\exists \ (i,j) \ | \ i \in R, \ j \in C \Leftrightarrow \exists \ (j,i) \ | \ i \in R, \ j \in C.
	\end{equation}
	
	\paragraph{}
	The same information can be represented by the graph $\mathcal{G}=(\mathcal{E}, \mathcal{V})$ with $ \mathcal{V}=\left\lbrace  1,2,...,m,m+1,m+2,...,m+n \right\rbrace $, the set of nodes denoting both row and column indices, i.e. $\mathcal{V}=R \cup \overline{C}, \ \overline{C}=\left\lbrace c+m, \ c \in C \right\rbrace  $  and 
    
    \begin{equation*}  
        \mathcal{E}=\left\lbrace (i,\overline{j}) \ |  \ , \ i \in R, \overline{j}=j+m, \ j \in C, \  \tilde{A}_{i,j} \neq 0 \right\rbrace 
     \end{equation*}
     the set of edges. The adjacency matrix for $\mathcal{G}$ is the symmetric matrix (see \eqref{bip2})
	
	\begin{equation}
	\label{211}
		\hat{A}=
		 \left[ 
		\begin{matrix}
		0 & \tilde{A} \\
		\tilde{A}^T & 0 
		\end{matrix}
		\right] .  
	\end{equation}	
	Due to the fact that there is no row-row or column-column adjacency (see \eqref{bip1}),  $\hat{A}=$  contains two  zero  blocks on its diagonal, with sizes $m \times m$ and $n \times n$ respectively. We seek to reorder $A$ such that its nonzeros are closer to the diagonal. To this end, we apply the Cuthill-McKee (CM) algorithm (see \cite [Chapter~8]{duff2017direct} ) to $\hat{A}$ and use the result to reorder $A$, thus reducing its bandwidth, and giving it a special structure.	The CM algorithm can be thought of as a particular form of Breadth First Search (BFS), where the neighbors of each node are visited with respect to an increasing order of their degree, and the starting node is the one having the minimum degree.	Visiting a node's neighbors implies relabeling them with the smallest unused label. For example, given a node $i$ with neighbors $p, \ t, \ q$, CM will sort them in increasing order of degree, labeling them as $i+1, \ i+2, \ i+3$, if these labels are still available. 	A level set is the set of nodes (not yet labeled) neighboring at least one node of the previous level set. The first level set contains only the starting node. The particular form of adjacency described above (see \eqref{bip1}) also implies that level sets alternate between sets of row indices and sets of column indices.
	
	\begin{equation*}
		s:  \text{ starting node},\quad 	S_1=\{ s \},
	\end{equation*}
	
	\begin{equation*}
		S_i=\{ j \ | \ \exists \ (k,j) \in \mathcal{E}, \ k \in S_{i-1}, \ j \in \overline{\mathcal{V}}  \ \backslash \ \stackrel{i-1}{ \underset{p=1}{\bigcup} } S_p \},
	\end{equation*}
	
	\begin{equation}\label{set1}
		\text{If }  S_i=\{ j \ | \ j \le m \} \text{ then } S_{i+1}=\{ j \ | \ j > m \} ,
	\end{equation}
	
	\begin{equation}\label{set2}
		\text{If }  S_i=\{ j \ | \ j > m \} \text{ then } S_{i+1}=\{ j \ | \ j \le m \}.
	\end{equation}
	The reordered matrix $\hat{A}$ is $\hat{A}^R$, with every row containing the relabeled neighbors of the node on the diagonal, where $A_{st}$ is the submatrix of $A$ corresponding to the rows of row level set $s$ and columns of column level set $t$. Every level  set is represented by a square block on the diagonal, of size equal to the number of nodes in the set. Due to adjacency, these blocks are  zero blocks, and the matrix is symmetric.
	\begin{equation}
	\label{212}
    \hat{A}^R=
	\left[ 
	\begin{matrix}
           0 & A_{11}  \\
	A_{11}^T &     0    & A_{21}^T \\
             & A_{21}   &     0     & A_{22} \\
             &          &  A_{22}^T &     0   & A_{32}^T \\
             &          &           &  A_{32} &     0     & A_{33} \\
             &          &           &         &  A_{33}^T &     0   & A_{43}^T \\
             &          &           &         &           &  ...    &     ...   & ... \\
	\end{matrix}
	\right] .
	\end{equation}	
	The blocks above the diagonal refer the new, previously unlabeled nodes, which will constitute the next level set. Thus their size is $n_i \times n_{i+1}$, the number of nodes in the current level set, and that of the next, respectively. If the starting node of the CM reordering corresponds to a row index, odd level sets will contain row indices while even level sets will contain column indices (see \eqref{set1}) and \eqref{set2}.	The graph traversal by CM ensures that all nodes are visited, thus:
	\begin{equation} \label{card}
		\left| \underset{k=0}{\bigcup} S_{2k+1} \right| =m, \quad \left| \underset{k=1}{\bigcup} S_{2k} \right| =n.
	\end{equation}
	Using the information related to the construction of $\hat{A}^R$ we can produce a reordering of rows and columns of the original matrix $A$ as described below (see \cite{reid2006reducing}, page 809): {\it ``if we permute the rows of $A$ by the row level sets and the orderings within them, and permute the columns by the column level sets and the orderings within them, we find the block bidiagonal matrix $A^R$ written as''}
       	\begin{equation}\label{AR}
    A^R=
    \left[ 
    \begin{matrix}
    A_{11}  \\
    A_{21}      &  A_{22} \\
                &  A_{32} & A_{33} \\
                &         & A_{43} & A_{44} \\
                &         &        & ...      & ... \\
    \end{matrix}
    \right]   
    \begin{matrix}
    \}\ n_1 \\
    \}\ n_3 \\
    \}\ n_5 \\
    \}\ n_7 \\
    ...
    \end{matrix}
    \end{equation}
        \begin{equation*}
    \ \ \ n_2 \quad \ n_4 \quad \  n_6 \ \quad  n_8  \ \quad ...
    \end{equation*}     
    with $n_i= |S_i|$, the sizes of the blocks. Similar arguments can be made if the first node chosen by CM represents a column index. In this case, the odd and even sets have swapped contents, with statements \eqref{card} and \eqref{AR} changed accordingly. Additionally, $A^R$ is built using the transposed blocks of $\hat{A}^R$.    
   	The CM algorithm outputs a set $ \overline{\mathcal{V}}$, a reordering of $\mathcal{V}$ based on the successive concatenation of the level sets. Let $P$ be the matrix obtained by permuting the rows of $I_m $ such that their order is the same as that of the row indices in $ \overline{\mathcal{V}}$.  Let $Q$ be the matrix obtained by permuting the columns of $I_n$ to mirror the order of column indices in $ \overline{\mathcal{V}}$. The matrix $A^R$ is a permutation of $A$, considering that the blocks in \eqref{AR} contain all the nonzeros of $A$ and $A^R$ has the same size as  $A$ (see (\ref{card})). As described above (see \cite{reid2006reducing}, page 809), we find permutation matrices $P$ and $Q$ such that $PAQ$ yields this permutation. Therefore, we have 
    \begin{equation}\label{paq}
    A^R=P \tilde{A} Q.
    \end{equation}
 In the inconsistent case of the system $\tilde{A} \tilde{x} = \tilde{b}$ we use the sparse equivalent augmented system 
\be
\label{210}
\left[ 
		\begin{matrix}
		I & \tilde{A} \\
		\tilde{A}^T & 0 \\
		\end{matrix}
		\right] \left[ 
		\begin{matrix}
		\tilde{r} \\
		\tilde{x} \\ 
		\end{matrix}
		\right] = \left[ 
		\begin{matrix}
		\tilde{b} \\
		0 \\ 
		\end{matrix}
		\right] ~\Leftrightarrow~ \p \tilde{A} \tilde{x} - \tilde{b} \p = \min!
\ee
   We then apply the CM algorithm to the (symmetric) matrix 
	\be
	\label{210p}
	\left[ 
		\begin{matrix}
		I & \tilde{A} \\
		\tilde{A}^T & 0 \\
		\end{matrix}
		\right]
	\ee
		in (\ref{210}) and get a matrix $A^R$ of the form  (see (\ref{212}))
		\begin{equation}
	\label{213}
    \hat{A}^R=
	\left[ 
	\begin{matrix}
           E_{11} & A_{11}  \\
	A_{11}^T &     E_{22}    & A_{21}^T \\
             & A_{21}   &     E_{33}     & A_{22} \\
             &          &  A_{22}^T &     E_{44}   & A_{32}^T \\
             &          &           &  A_{32} &     E_{55}     & A_{33} \\
             &          &           &         &  A_{33}^T &     E_{66}   & A_{43}^T \\
             &          &           &         &           &  ...    &     ...   & ... \\
	\end{matrix}
	\right], 
	\end{equation}
	where $E_{ii} = I_{n_i}$ or $E_{ii} = O_{n_i}$ depending on  the starting index of the CM algorithm. The $ I $ and $ O $ blocks alternate starting with $ I $ if the first index chosen by CM corresponds to a row or $ O $ if it is a column index.
	Therefore, according to the procedure of the CM  algorithm we find permutation matrices $\hat{P}, \hat{Q}$ such that
		\be
		\label{214}
		   \hat{A}^R=\hat{P}  ~\hat{A} ~\hat{Q},
		\ee
		hence the equivalent consistent systems
	\be
	\label{214p}
	\hat{A} \left[ 
		\begin{matrix}
		\tilde{r} \\
		\tilde{x} \\
		\end{matrix}
		\right] = \left[ 
		\begin{matrix}
		\tilde{b} \\
		0 \\
		\end{matrix}
		\right] ~\Leftrightarrow~ (\hat{P} \hat{A} \hat{Q}) (\hat{Q}^T \left[ 
		\begin{matrix}
		\tilde{r} \\
		\tilde{x} \\
		\end{matrix}
		\right]) = \hat{P} \left[ 
		\begin{matrix}
		\tilde{b} \\
		0 \\
		\end{matrix}
		\right] ~\Leftrightarrow~ 
		\hat{A}^R \left[ 
		\begin{matrix}
		\hat{r} \\
		\hat{x} \\
		\end{matrix}
		\right] = \left[ 
		\begin{matrix}
		\hat{b} \\
		\hat{c} \\
		\end{matrix}
		\right],
	\ee
	with $\hat{A}^R$ from (\ref{214}) and 
	\be
	\label{215}
	\left[ 
		\begin{matrix}
		\hat{r} \\
		\hat{x} \\
		\end{matrix}
		\right] = \hat{Q}^T \left[ 
		\begin{matrix}
		\tilde{r} \\
		\tilde{x} \\
		\end{matrix}
		\right], ~~~~\left[ 
		\begin{matrix}
		\hat{b} \\
		\hat{c} \\
		\end{matrix}
		\right] = \hat{P} \left[ 
		\begin{matrix}
		\tilde{b} \\
		0 \\
		\end{matrix}
		\right] .
	\ee
		The only difference with respect to the consistent case is that $\hat{A}^R$ in (\ref{213}) is block tridiagonal, instead of $A^R$ in (\ref{AR}) which is block bidiagonal. This will influence only the construction  proposed in \cite{duffsisc} of the mutually orthogonal row blocks matrix $\bar{A}$ from  (\ref{3}). Indeed, and for a clear presentation we will consider the following particular case of the matrix $\hat{A}^R$ from (\ref{213})
	\begin{equation}
	\label{00-1}
    \hat{A}^R=
	\left[ 
	\begin{matrix}
          E_{11} & A_{11}  \\
	A_{11}^T &     E_{22}    & A_{21}^T \\
             & A_{21}   &     E_{33}     & A_{22} \\
             &          &  A_{22}^T &     E_{44}   & A_{32}^T \\
             &          &           &  A_{32} &    E_{55}    & A_{33} \\
             &          &           &         &  A_{33}^T &     E_{66}   & A_{43}^T \\
	\end{matrix}
	\right],  
	\end{equation}
and the matrix $A=A^R$ from (\ref{AR})     
   	\begin{equation}\label{AR1}
    A=
    \left[ 
    \begin{matrix}
    A_{11}  \\
    A_{21}      &  A_{22} \\
                &  A_{32} & A_{33} \\
                &         & A_{43}  \\
    \end{matrix}
    \right]   
    \begin{matrix}
    \}\ n_1 \\
    \}\ n_3 \\
    \}\ n_5 \\
    \}\ n_7 \\
    \end{matrix}
    \end{equation}
        \begin{equation*}
    \ \ \ n_2 \quad \ n_4 \quad \  n_6 \
    \end{equation*}
		\textbf{(i) Consistent case.} For the matrix $A=A^R$ in (\ref{AR1}) we define  $\ba = [A ~~\Gamma ]$ from (\ref{3}) by 
 \begin{equation}\label{Abar}
    \ba = 
    \left[ 
    \begin{matrix}
    A_{11}  &         &         &   A_{11}       &         &  \\
    A_{21}  &  A_{22} &         &  -A_{21}       &-A_{22}  &  \\
            &  A_{32} & A_{33}   &               & A_{32}   & A_{33}\\
            &         & A_{43}    &               &   &       -A_{43} \\
    \end{matrix}
    \right]   
    \ee
 Then, the obvious equality holds
 $$
 \Gamma = \left[ 
    \begin{matrix}
    A_{11}  \\
    -A_{21}      &  -A_{22} \\
                &  A_{32} & A_{33} \\
                &         & -A_{43}  \\
                \end{matrix}
                \right] = D ~A = D ~\left[ 
    \begin{matrix}
    A_{11}  \\
    A_{21}      &  A_{22} \\
                &  A_{32} & A_{33} \\
                &         & A_{43}  \\
                \end{matrix}
                \right],
 $$
 with $D$ given by 
 \be
 \label{d}
 D = \left[ \begin{matrix}
 I_{n_1} & 0 & 0 & 0 \\
 0 & -I_{n_3} & 0 & 0 \\
 0 & 0 & I_{n_5} & 0 \\
 0 & 0 & 0 & -I_{n_7} \\
 \end{matrix} \right]
 \ee
 where $I_{n_j}: n_j \times n_j$ are the appropriate unit matrices (see (\ref{AR1})). Therefore, according to (\ref{Abar}) (see also (\ref{3})) we have
\be
\label{xxx}
\bar{A}_i = [ A_i ~~~(-1)^{i+1} A_i],
\ee
which tells us that, for this construction
\be
\label{p12}
rank(\bar{A}_i) = rank(A_i), \forall i=1, \dots, p.
\ee
\textbf{(ii) Inconsistent case.} For the matrix  
   $A = \hat{A}^R$ in (\ref{00-1}) we define  $\ba = [A ~\Gamma ]$ from (\ref{3}) as follows. First of all we group the blocks as
		\be
		\label{220}
	A = 	\left[ \begin{matrix}
 B_{11} & B_{12} & 0 & 0 \\
 0 & B_{22} & B_{23} & 0 \\
 0 & 0 & B_{33} & B_{34} \\
 \end{matrix} \right],
		\ee
		where the new block are defined by 
		$$
		B_{11} = 	\left[ \begin{matrix}
 E_{11} \\
 A^T_{11} \\
 \end{matrix} \right], 
~B_{12} = 	\left[ \begin{matrix}
 A_{11} & 0\\
 E_{22} & A^T_{21}\\
 \end{matrix} \right], 
~B_{22} = 	\left[ \begin{matrix}
 A_{21} & E_{33}\\
 0 & A^T_{22}\\
 \end{matrix} \right],
		$$
		\be
		\label{221}
B_{23} = 	\left[ \begin{matrix}
 A_{22} & 0\\
 E_{44} & A^T_{32} \\
 \end{matrix} \right], 
~B_{33} = 	\left[ \begin{matrix}
 A_{32} & E_{55}\\
 0 & A^T_{33}\\
 \end{matrix} \right], 
~B_{34} = 	\left[ \begin{matrix}
 A_{33} & 0\\
 E_{66} & A^T_{43}\\
 \end{matrix} \right] .
		\ee
Then we construct the matrix  $\ba = [A ~\Gamma ]$ from (\ref{3}) again as (see (\ref{xxx}) and (\ref{Abar})) 
\begin{equation}\label{222}
    \ba = 
    \left[ 
    \begin{matrix}
    B_{11}  &   B_{12}      &   0 & 0       &   B_{12}       &  0       & 0 \\
    0  &  B_{22} &   B_{23}  & 0    &  -B_{22}       &-B_{23}  & 0 \\
     0 & 0        &  B_{33} & B_{34}   &      0         & B_{33}   & B_{34}\\
    \end{matrix}
    \right] .  
    \ee

\end{document}